\newsavebox{\shortpagebox}
\newcommand{\shortpage}[1]
{\par
\setbox\shortpagebox=\vbox{\strut #1\par}%
\afterpage{\onecolumn
\begin{multicols}{2}
\unvbox\AP@partial
\end{multicols}}%
\unvbox\shortpagebox
\par}
\newcommand{\mcl}[1]{\mathcal{ #1}}
\newcommand{\mbf}[1]{\mathbf{ #1}}
\newcommand{\norm}[1]{\left\Vert #1\right\Vert}
\newcommand{\ip}[2]{\left\langle{#1},{#2}\right\rangle}
\newcommand{\bmat}[1]{\begin{bmatrix} #1\end{bmatrix}}
\newcommand{\mat}[1]{\begin{matrix}#1\end{matrix}}
\newcommand{\R}{\mathbb{R}}
\newcommand{\N}{\mathbb{N}}
\newtheorem{definition}{Definition}
\crefname{EqnBlock}{Figure}{Figures}
\newcommand{\threepi}[1]{\mcl{P}_{\{#1\}}}
\title{Representation of linear PDEs with spatial integral terms as Partial Integral Equations}
\author{Sachin Shivakumar$^{1}$ \and Amritam Das$^{2}$ \and Matthew M. Peet$^{1}$
\thanks{$^{1}$ Sachin Shivakumar\{sshivak8@asu.edu\} and Matthew M. Peet\{mpeet@asu.edu\} are with School for Engineering of Matter, Transport and Energy, Arizona State University, USA}%
\thanks{$^{2}$ Amritam Das\{am.das@tue.nl\} is with Eindhoven University of Technology, Netherlands}%
}
\begin{document}
\maketitle

\begin{abstract}
In this paper, we present the Partial Integral Equation (PIE) representation of linear Partial Differential Equations (PDEs) in one spatial dimension, where the PDE has spatial integral terms appearing in the dynamics and the boundary conditions. The PIE representation is obtained by performing a change of variable where every PDE state is replaced by its highest, well-defined derivative using the Fundamental Theorem of Calculus to obtain a new equation (a PIE). We show that this conversion from PDE representation to PIE representation can be written in terms of explicit maps from the PDE parameters to PIE parameters. Lastly, we present numerical examples to demonstrate the application of the PIE representation by performing stability analysis of PDEs via convex optimization methods.
\end{abstract}

\section{Introduction}\label{sec:Introduction}

Certain Partial Differential Equations (PDEs) with spatial integral terms can be represented as Partial Integral Equations (PIEs), which then can be subjected to analysis/control techniques developed for PIEs. While a formal definition of the class of PDEs is presented in Section \ref{sec:PDE}, here we briefly describe the form of these spatial integral terms. Given a $N$-times spatially differentiable state $\mbf x$ in time $t$ and space $s\in [a,b]$, we refer to terms of the form $\int_{\Omega} F(s)\partial_s^k \mbf x(t,\theta) d\theta$ as spatial integral terms, where $0\le k\le N$. The integral can be definite ($\Omega = [a,b]$) or indefinite ($\Omega = [a,s]$ or $[s,b]$). When terms of this form appear in the dynamics or the boundary conditions of a PDE, we say the PDE has spatial integral terms.

PDEs, that have spatial integral terms, appear in various models of population dynamics \cite{population_dynamics}, linear thermoelasticity \cite{day2013heat}, chemical reaction and transport \cite{chemical}, etc.). Alternatively, PDE models involving physical point sensor measurements also have spatial integral terms because, in practice, point measurements are spatial averages of the measured quantity. The closed-loop dynamics of PDEs (without integral terms) with a backstepping controller or observer \cite{krstic2008boundary} typically have spatial integral terms.

\textit{Example 1:} To demonstrate a simple example of a PDE with integral terms in the boundary condition, we consider the McKendrick PDE used to model population dynamics; See \cite{population_dynamics}). The PDE model has the form
\begin{align*}
\dot{x}(t,s) &= -\partial_s x(t,s) + f(s)x(t,s),\quad s\in[0,1],\\
x(t,0) &= \int_0^1 h(s)x(t,s)ds,
\end{align*}
where $x$ is the density of the population of age $s$ at any given time $t$. The boundary condition can be considered an approximation of the number of newborns at time $t$ and is usually modeled as an integral boundary condition (a weighted average of the population density at different ages).

\textit{Example 2:} Next, we present an example where a simple PDE without integral terms, coupled with a boundary sensed observer, becomes a PDE with spatial integral terms. Let us take the reaction-diffusion PDE model with a boundary sensed observer. The equations can be written as:
\begin{align*}
&\dot{x}(t,s) = \lambda x(t,s) + \partial_s^2 x(t,s), \quad s\in [0,1],\\
&\dot{\hat x}(t,s) = \lambda \hat x(t,s) + \partial_s^2 \hat x(t,s) + l(s) \left(\partial_s x(t,1) - \partial_s \hat x(t,1)\right),\\
&\partial_s x(t,0) = 0, \quad x(t,1) = 0,\\
&\partial_s \hat x(t,0) = 0, \quad \hat x(t,1) = 0,
\end{align*}
where $x$ is the distributed PDE state, $\hat x$ is the observer state, and $l$ is the observer gain. A physical implementation of a boundary sensor leads to the appearance of integral terms because point measurements such as $y(t) = \partial_s x(t,1)$ (the sensor measurement that drives the observer dynamics) are not, typically, exact due to the limitations of the sensing mechanism. A more accurate representation of such an observer is given by
\begin{align*}
&\dot{\hat x}(t,s) = \lambda \hat x(t,s) + \partial_s^2 \hat x(t,s) \\
&\quad + \int_0^1l(s)w(\theta)\left(\partial_s x(t,\theta) - \partial_s \hat x(t,\theta)\right)d\theta.
\end{align*}
where $w$ is a Gaussian function centered at $s=1$.

Various methods have been developed for analyzing such PDEs, such as eigenvalue-based analysis methods presented in \cite{lasiecka_triggiani_2000}. However, the presence of integral terms can cause errors and instabilities; a closed form of the eigenfunctions for PDEs with integral terms cannot be found and must be approximated numerically --- giving rise to approximation errors and numerical instabilities. Some mathematical analysis methods were presented, specifically for PDEs with integral terms, in \cite{appell2000partial, gil2015stability} whereas \cite{yang2017stability} used computational tools like LMIs for analysis. These methods did not have approximation errors; however, they did not consider PDEs with integral terms at the boundary.

Our approach for analysis/control of PDEs involves the use of PIE representation. Since PIE representation of linear PDEs was introduced in \cite{peet_2018CDC}, various analysis and control methods for PIEs have been developed (see \cite{shivakumar_SICON2022}). These analysis/control methods are based on convex-optimization problems that can be solved using Linear Matrix Inequalities (LMIs) and do not involve any approximation. If we can find a PIE representation for PDEs with integral terms, then we can use analysis/control methods for the PIE to solve the analysis/control problem of such PDEs.

Thus, this paper aims to find the conditions under which a PDE with integral terms has a PIE representation and then find the PIE representation if it exists. To achieve these goals, we first present a standard parametric representation for the PDEs with integral terms that allows integral terms in the dynamics and the boundary. Then, based on this standard representation, we find a sufficient condition for the existence of a PIE representation and then show that the two representations are equivalent by finding a bijective map from the solution of the PDE to the solution of the corresponding PIE.

To summarize the main contribution of this paper, we provide a test for the existence of a PIE representation for a given PDE with integral terms. Then, we also present explicit maps from the parameters of a PDE with integral terms to the parameters of the PIE where the PIE, so obtained, will have stability properties identical to that of the PDE. Then, we present the stability test for PDE with integral terms as an operator-valued optimization problem. While the equivalence of properties of a PDE and the corresponding PIE can be easily extended to input-output properties (see \cite{shivakumar2019generalized}), that topic is beyond the scope of this paper and will not be discussed here.

The paper is organized as follows: First, to introduce the ``target'' representation (a PIE), we define the algebra of Partial integral (PI) operators in \Cref{subsec:PI} and then define the class of PIE models, along with a definition of a solution in \Cref{subsec:PIE}. We introduce the class of PDE systems in \Cref{sec:PDE} for which a PIE representation will be found. Next, we propose an admissibility condition that guarantees the existence of an equivalent PIE representation for a given PDE and present the formulae for constructing the PIE representation of the PDE (See \Cref{sec:pde2pie}). Finally, in \Cref{sec:numerical}, we use specific PDE models to test stability.

\section{Notation}\label{sec:notation}
We use the notation $0_{m\times n}$ to represent the zero matrix of dimension $m\times n$ and $0_n:=0_{n\times n}$. Similarly, $I_n$ is the identity matrix of dimension $n\times n$. When the dimensions are clear from the context, we use $0$ and $I$ for the zero and identity matrix. $\R_+$ is the set of non-negative real numbers.

$L_2^n[a,b]$ is the Hilbert space of $n$-dimensional vector-valued Lebesgue square-integrable functions defined on the interval $[a,b]$ and is equipped with the standard inner product. $L_{\infty}[a,b]^{m,n}$ is the Banach space of $m\times n$-dimensional essentially bounded measurable matrix-valued functions on $[a,b]$ equipped with the essential supremum singular value norm.

The bold font, $\mbf x$ or $\mbf x(t)$, typically implies that $\mbf x$ or $\mbf x(t)$ is a scalar or vector-valued function (e.g. $\mbf x(t)\in L_2^n[a,b]$) whereas the normal font $x(t)$ implies that $x(t)$ is a finite-dimensional vector. For a suitably differentiable function, $\mbf x$ of spatial variable $s$, we use $\partial_s^j \mbf x$ to denote the $j$-th order partial derivative $\frac{\partial^j\mbf x}{\partial s^j}$. For a suitably differentiable function of time and possibly space, we denote $\dot{\mbf x}(t)=\frac{\partial }{\partial t}\mbf x(t)$. We use $W_{k}^n$ to denote the Sobolev spaces \begin{align}\label{eq:sobolev}
W_{k}^n[a,b] := \{\mbf u\in L_2^n[a,b]\mid \partial^l_s \mbf u\in L_2^n[a,b] ~\text{for all} ~l\le k\}.
\end{align}
with inner product $\ip{\mbf u}{\mbf v}_{H_{k}}=\sum\nolimits_{i=0}^k \ip{\partial_s^i\mbf u}{\partial_s^i \mbf v}_{L_2^n}$. For brevity, we omit the domain $[a,b]$ and write $L_2^n$ or $W_{k}^n$ when clear from the context.

\section{PI Operators and PIE representation}\label{subsec:PI}
PIEs are defined by bounded, linear maps from $L_2^n \rightarrow L_2^n$ called Partial Integral (PI) operators, which are parameterized by separable functions. To quickly recall, a separable function is defined as follows.

\begin{definition}[Separable Function]\label{def:separable}
We say a function $R: [a,b]^{2} \rightarrow \R^{p \times q}$, is separable if there exist $r \in \N$ and functions $F\in L_{\infty}^{r\times p}[a,b]$ and $G\in L_{\infty}^{r\times q}[a,b]$ such that $R(s,\theta)=F(s)^T G(\theta)$.
\end{definition}

Using the above definition, we can define $3$-PI operators (with three parameters) as follow.
\begin{definition}[3-PI operators, $\Pi_3$]\label{def:3PI}
Given $R_0\in L_{\infty}^{p \times q}[a,b]$ and separable functions $R_1, R_2: [a,b]^{2} \rightarrow \R^{p \times q}$, we define the operator $\mcl{P}_{\{R_i\}}$ for $\mbf v \in L_2$ as
\begin{align}\label{eq:3pi}
\left(\mcl{P}_{\{R_i\}}\mbf v\right)(s)&:= R_0(s) \mbf v(s) +\int_{a}^s R_1(s,\theta)\mbf v(\theta)d \theta\notag\\
&\quad +\int_s^bR_2(s,\theta)\mbf v(\theta)d \theta.
\end{align}
Furthermore, we say an operator, $\mcl P$, is 3-PI of dimension $p \times q$, denoted $\mcl P \in [\Pi_3]_{p,q}\subset \mcl L(L_2^q,L_2^p)$, if there exist functions $R_0\in L_{\infty}^{p\times q}$ and separable functions $R_1,R_2$ such that $\mcl P=\mcl{P}_{\{R_i\}}$.
\end{definition}
Proving that the set of 3-PI operators form a $^*$-algebra is beyond the scope of this paper; however, the proof can be found in \cite{shivakumar_SICON2022}. Additionally, we define $[\Pi_3]_{n,n}^+$ as the set of positive definite 3-PI operators where the positivity is defined with respect to $L_2$-inner product, i.e, $[\Pi_3]_{n,n}^+ := \{ \mcl P \in [\Pi_3]_{n,n}\mid \ip{x}{\mcl P x}_{L_2} > 0 ~\forall x\in L_2~\text{and}~x\ne 0\}$.


\subsection{Partial Integral Equations}\label{subsec:PIE}
A PIE is an extension of the state-space representation of ODEs (vector-valued first-order differential equations on $\R^n$) to spatially-distributed states on $L_2$. A PIE model is parameterized by 3-PI operators as
\begin{align}\label{eq:PIE_full}
\mcl{T}\dot{\mbf{x}}_f(t)&=\mcl{A}\mbf x_f(t), \mbf{x}_f(0) = \mbf{x}_f^0\in L_2^{m,n}[a,b],
\end{align}
where $\mcl T\in [\Pi_3]_{n,n}$ and $\mcl A\in [\Pi_3]_{n,n}$ are 3-PI operators.

Unlike PDE models, a PIE does not allow for spatial derivatives -- only a first-order derivative with respect to time. In particular, the state of the PIE model, $\mbf x_f \in L_2[a,b]$, is not differentiable; consequently, no boundary conditions are possible in a PIE model. Finally, given a PIE, we require differentiability of the solution with respect to the $\mcl{T}$-norm, which is defined as
\begin{align}\label{eq:Tnorm}
\norm{\mbf{x}_f}_{\mcl{T}} := \norm{\mcl{T}\mbf{x}_f}_{L_2}, \qquad \text{for}~\mbf{x}_f\in L_2.
\end{align}
We require the map $\mcl T$ in a PIE model (if the PIE model corresponds to a PDE) to be bijective, and hence one can easily prove that $\norm{\cdot}_{\mcl T}$ is a norm (see \cite{shivakumar_SICON2022}). The solution, if it exists, of a PIE must satisfy the following requirements.

\begin{definition}[Definition of solution of a PIE]\label{def:piesolution}
For given inputs $\mbf x_f^0\in L_2^{n}$, we say that $\{\mbf x_f\}$ satisfies the PIE defined by $\{\mcl{T},$ $\mcl{A}\}$ with initial condition $\mbf x_f^0$ if: a) $\mbf x_f(t)\in L_2^{n}[a,b]$ for all $t\ge 0$; b) $\mbf x_f$ is Frech\'et differentiable with respect to the $\mcl{T}$-norm almost everywhere on $\R_+$; c) $\mbf x_f(0) = \mbf x_f^0$; and d) Eq.~\eqref{eq:PIE_full} is satisfied for almost all $t\in\R_+$.
\end{definition}

\section{A parametric form of PDEs with integral terms}\label{sec:PDE}

Now that we have defined the ``target'' representation, we need to define the class of PDEs that will be converted to this target representation. To define the PDE, we first categorize the parameters of a PDE into three groups based on the type of constraint they appear in -- namely the continuity constraints, the in-domain dynamics, and the boundary conditions. The continuity constraints specify the existence of partial derivatives and boundary values for each state as required by the in-domain dynamics and boundary conditions. The boundary conditions are represented as a real-valued algebraic constraint that maps the distributed state to a vector of boundary values. The in-domain dynamics (or generating equation) specify the time derivative of the state at every point in the interior of the domain and allow for both integral and derivative operators in the spatial variable $s$. The following subsections highlight the parameters required to define the above three types of constraints in a PDE.

\subsubsection{The continuity constraint} Given a PDE state, $\mbf x(t,\cdot)$, the continuity constraint can be uniquely defined by the parameter $n=\{n_0,n_1,n_2\}$, which partitions and orders the PDE states $\mbf x$ by increasing differentiability as follows.
\[
\mbf x(t,\cdot)=\bmat{\mbf x_0(t,\cdot)\\ \mbf x_1(t,\cdot)\\\mbf x_2(t,\cdot)} \in \bmat{W_0^{n_0}\\W_1^{n_1} \\ W_2^{n_2}} =: W^n.
\]
Given such an $n\in \N^3$, we can identify all well-defined partial derivatives of $\mbf x$.

\noindent \textbf{Notation:} For convenience, we define the vector of all continuous partial derivatives of the PDE state $\mbf x$ as permitted by the continuity constraint as $\mbf x_c$, the vector of all partial derivatives as $\mbf x_D$ and the list of all possible boundary values of $\mbf x$ as $x_b$, i.e.,
\begin{flalign}\label{eq:odepde_bc}
\mat{\mbf x_c(t,\cdot)=\bmat{\mbf x_1(t,\cdot)\\\mbf x_2(t,\cdot)\\\partial_s \mbf x_2(t,\cdot)}\\\\\quad x_b(t)=\bmat{\mbf x_c(t,a)\\\mbf x_c(t,b)}},\quad \mbf x_D(t,\cdot) = \bmat{\mbf x_0(t,\cdot)\\\mbf x_1(t,\cdot)\\\mbf x_2(t,\cdot)\\\partial_s\mbf x_1(t,\cdot)\\\partial_s\mbf x_2(t,\cdot)\\\partial_s^2 \mbf x_2(t,\cdot)}.
\end{flalign}
Additionally, we define some notation given the continuity constraint parameter $n=\{n_0,n_1,n_2\}$ that will repeatedly appear in the subsequent sections. Let $n_{\mbf x}:=\sum_{i=0}^2 n_i$ be the number of states in $\mbf x$ and $n_{S}=\sum_{i=0}^2 i\cdot n_{i}$.
\subsubsection{Boundary Conditions}
Given an $n=\{n_0,n_1,n_2\}$, we now parameterize a generalized class of boundary conditions consisting of a combination of boundary values and integrals of the PDE state. Specifically, the boundary conditions are parameterized by square integrable functions $B_{I}(s) \in \R^{n_{BC} \times (n_{\mbf x}+n_{S})}$ and $B\in \R^{n_{BC}\times 2n_S}$ as
\begin{flalign}\label{eq:odepde_b}
0 &= \int_{a}^{b} B_{I}(s)\mbf x_D(t,s)ds -Bx_b(t)
\end{flalign}
where $n_{BC}$ is the number of specified boundary conditions. For reasons of well-posedness, as discussed in \Cref{sec:pde2pie}, we typically require $n_{BC}=n_{S}$.

These boundary conditions and the continuity constraints collectively define the domain of the infinitesimal generator -- which specifies a set of acceptable solutions $\mbf x(t)\in X$ for the PDE -- as
\begin{align}\label{eq:odepde_general_domain}
&X = \notag\\
&\left\lbrace \mbf{x}\in W^{n}[a,b]: B\bmat{\mbf x_c(t,a)\\\mbf x_c(t,b)}=\int\limits_{a}^{b} B_{I}(s)\mbf{x}_D(t,s)ds\right\rbrace.
\end{align}

\noindent\textbf{Notation:} For convenience, we collect all the parameters which define the boundary-valued constraint in Eq.~\eqref{eq:odepde_b} and introduce the shorthand notation $\mbf G_{\mathrm{b}}$ which represents the labeled tuple of system parameters as
\begin{align}
&\mbf G_{\mathrm{b}} = \left\{B,~B_{I}\right\}.\label{eq:BC-parms}
\end{align}
When this shorthand notation is used to denote a given set of system parameters, it is presumed that all parameters have appropriate dimensions.

\subsubsection{Dynamics of the PDE}
Finally, we may now define the dynamics of the PDE which is parameterized by the functions $A_{0}(s),$ $A_{1}(s,\theta),$ and $A_{2}(s,\theta)$ $\in \R^{n_{\mbf x} (n_{\mbf x}+n_{S})}$ as
\begin{flalign}\label{eq:general_pde_}
\dot{\mbf{x}}(t,s) &= A_0(s)\mbf x_D(t,s) +\int\limits_{a}^{s}A_1(s,\theta)\mbf x_D(t,\theta)d\theta\notag\\
&\quad+\int\limits_{s}^{b}A_2(s,\theta)\mbf x_D(t,\theta)d\theta,
\end{flalign}
with the constraint $\mbf x(t)\in X$.

\noindent\textbf{Notation:} For convenience, we collect all the parameters from the generating equation (Eq.~\eqref{eq:general_pde_}) and introduce the shorthand notation $\mbf G_{\mathrm{p}}$ which represents the labelled tuple of system parameters as
\begin{align}
&\mbf G_{\mathrm{p}} = \left\{A_{0},~A_{1},~A_{2}\right\}.\label{eq:pde-parms}
\end{align}
When this shorthand notation is used to denote a given set of system parameters, it is presumed that all parameters have appropriate dimensions. Now, we define the notion of a solution to the above-described PDE.

\begin{definition}[Definition of solution of a PDE]\label{defn:PDE}
For given $\mbf x^0 \in X$, we say that $\mbf x$ satisfies the PDE defined by $\{n,\mbf G_{\mathrm b}, \mbf G_{\mathrm{p}}\}$ (defined in Eqs.~\eqref{eq:BC-parms} and \eqref{eq:pde-parms}) with initial condition $\mbf{x}^0$ if: a) $\mbf x(t) \in X$ for all $t\ge 0$; b) $\mbf x$ is Frech\'et differentiable with respect to the $L_2$-norm almost everywhere on $\R_+$; c)	$\mbf x(0) = \mbf x^0$; and d) Eq.~\eqref{eq:general_pde_} is satisfied for almost all $t\ge 0$.
\end{definition}

\section{Representing a PDE as a PIE}\label{sec:pde2pie}
In \Cref{sec:PDE}, we presented the PDE form Eq.~\eqref{eq:general_pde_}. Recall that our goal is to find a PIE form of such PDEs so that the computational tools developed for PIEs can be used in analysis/control. Specifically, we will express PDEs introduced in the previous section as PIEs of the form
\begin{align}\label{eq:PIE_}
\mcl{T}\dot{\mbf{x}}_f(t)&=\mcl{A}\mbf x_f(t),\qquad \mbf{x}_f(0) = \hat{\mbf{x}}_f^0\in L_2^{n}.
\end{align}
However, first, we have to verify that such a PIE form exists, which can be verified by testing some additional constraints on $\mbf G_{\mathrm b}$. Specifically, we can verify that for any well-posed PDE of the form given in Section \ref{sec:PDE} with the initial condition $\mbf x^0\in X$, there exists a corresponding PIE with corresponding initial condition $\mbf x_f^0 \in L_{2}^n$ whose solution can be used to construct a solution to the PDE

\subsection{Admissibility of the Boundary Conditions}\label{subsec:assumptions}
The following definition of admissibility imposes a notion of well-posedness on $X$, the domain of the PDE defined by the continuity constraints and the boundary conditions, which, when satisfied, guarantees the existence of a PIE form for the PDE.
\begin{definition}[Admissible Boundary Conditions]
Given the parameters $\{n, \mbf G_{\mathrm b}\}$, we say the pair $\{n,\mbf G_{\mathrm b}\}$ is \textbf{admissible} if $B_T$ is invertible where
\begin{align}\label{as:invertible}
&B_T := B\bmat{T(0)\\T(b-a)}-\int_{a}^{b}B_{I}(s)U_2 T(s-a)ds,
\end{align}
and the functions $T$, and $U_2$ are as defined in \Cref{fig:Gb_definitions}.
\end{definition}
Since $B_T$ is invertible only when it is a square matrix, naturally, we require $n_{BC}=n_S$, i.e., when we have $n_S$ differentiable states, we need $n_S$ boundary conditions for a well-posed solution. Note that the test for invertibility of $B_T$ depends only on the boundary condition parameters and not the dynamics or the initial condition.
Hence, we refer to this test as ``well-posedness of the boundary conditions'', which is not necessarily the same as ``well-posedness of a PDE''.

\subsection{PIE representation of a PDE: $\mcl T$ and $\mcl A$}

Assuming that we have a PDE with admissible $\{n,\mbf G_{\mathrm b}\}$, in this subsection, we propose a PIE form for any PDE of the form Eq.~\eqref{eq:general_pde_} such that the solutions of the PDE and the corresponding PIE are equivalent, in a sense, the existence of the PDE solution guarantees the existence of a PIE solution and vice versa. The conversion of PDE to PIE and the equivalence of the solutions are summarized below.

\begin{restatable}{theorem}{thmEquivalence}\label{thm:equivalence}
Given a set of PDE parameters $\{n,$ $\mbf G_{\mathrm b},$ $\mbf G_{\mathrm{p}}\}$ as defined in \Cref{eq:pde-parms,eq:BC-parms} with $\{n,\mbf G_{\mathrm b}\}$ admissible, let the PI operators $\{\mcl{T},$ $\mcl A\}$ be as defined in \cref{fig:Gb_definitions}. Then, for any $\mbf{x}^0$ $\in$ $X$ ($X$ is as defined in \Cref{eq:odepde_general_domain}), $\{\mbf{x}\}$ satisfies the PDE defined by $\{n,\mbf G_{\mathrm b}, \mbf G_{\mathrm{p}}\}$ with initial condition $\mbf{x}^0$ if and only if $\{\mcl D\mbf{x}\}$ satisfies the PIE defined by $\{\mcl{T},$ $\mcl A\}$ with initial condition $\mcl D\mbf{x}^0 \in L_2^{n_{\mbf x}}$ where $\mcl D\mbf x = \text{col}(\partial_s^0\mbf x_0,\partial_s\mbf x_1,\partial_s^2\mbf x_2)$.
\end{restatable}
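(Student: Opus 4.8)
The plan is to prove the biconditional by establishing that $\mcl D$ and $\mcl T$ are mutually inverse bijections between the PDE domain $X$ of \eqref{eq:odepde_general_domain} and $L_2^{n_{\mbf x}}$, with $\mcl T$ realized as the Fundamental-Theorem-of-Calculus (FTC) reconstruction that recovers a state $\mbf x\in W^n$ from its vector of highest derivatives $\mbf x_f=\mcl D\mbf x=\text{col}(\mbf x_0,\partial_s\mbf x_1,\partial_s^2\mbf x_2)$. Denoting by $(\mbf x_f)_i$ the block of $\mbf x_f$ corresponding to $\mbf x_i$, we have $\mbf x_0=(\mbf x_f)_0$, while repeated integration from $s=a$ gives $\mbf x_1(s)=\mbf x_1(a)+\int_a^s(\mbf x_f)_1(\theta)\,d\theta$ and $\mbf x_2(s)=\mbf x_2(a)+(s-a)\partial_s\mbf x_2(a)+\int_a^s(s-\theta)(\mbf x_f)_2(\theta)\,d\theta$, with $\partial_s\mbf x_2$ reconstructed analogously. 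Each of the six blocks of the full derivative vector $\mbf x_D$ is therefore an affine function of $\mbf x_f$ and of the $n_S$ \emph{core} boundary values $\mbf x_1(a),\mbf x_2(a),\partial_s\mbf x_2(a)$; this affine dependence is exactly what the functions $T$ and $U_2$ of \Cref{fig:Gb_definitions} encode.

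The crux is to eliminate the core boundary values in favor of $\mbf x_f$. I would substitute the reconstruction into the boundary constraint \eqref{eq:odepde_b} and use that every entry of $\mbf x_D$ and of $x_b$ is affine in those core values; the constraint then collapses to a finite-dimensional linear system whose coefficient matrix is precisely $B_T$ of \eqref{as:invertible}. Admissibility ($B_T$ invertible) lets me solve uniquely for the core boundary values as a bounded linear functional of $\mbf x_f$, and substituting back expresses $\mbf x=\mcl T\mbf x_f$ as a 3-PI operator. This simultaneously shows that $\mcl T$ maps $L_2^{n_{\mbf x}}$ into $X$, since the continuity constraints hold by construction of the integrals and the boundary conditions hold by the choice of core values. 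Differentiating the reconstruction gives $\mcl D\mcl T=I$, and uniqueness of the core values gives $\mcl T\mcl D=I$ on $X$, establishing the bijection.

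With the bijection in place I would transport the equations. Writing $\mbf x_D=\mcl B\mbf x_f$ for the 3-PI operator assembled above and letting $\mcl R$ denote the right-hand-side map $A_0(\cdot)+\int_a^sA_1+\int_s^bA_2$ of \eqref{eq:general_pde_}, the generating equation reads $\dot{\mbf x}=\mcl R\mbf x_D=\mcl R\mcl B\mbf x_f=:\mcl A\mbf x_f$, where closure of the 3-PI class under composition guarantees $\mcl A\in[\Pi_3]$. Since $\mcl T$ is time-independent and $\mbf x=\mcl T\mbf x_f$, we have $\dot{\mbf x}=\mcl T\dot{\mbf x}_f$, so the PDE dynamics is equivalent to $\mcl T\dot{\mbf x}_f=\mcl A\mbf x_f$, the PIE \eqref{eq:PIE_}. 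The solution clauses of \Cref{defn:PDE,def:piesolution} transfer through $\mcl D$ and $\mcl T$: the identity $\norm{\mbf x_f}_{\mcl T}=\norm{\mcl T\mbf x_f}_{L_2}=\norm{\mbf x}_{L_2}$ makes Fréchet differentiability of $\mbf x$ in $L_2$ equivalent to differentiability of $\mbf x_f$ in the $\mcl T$-norm, while the membership and initial-condition clauses pass directly across the bijection.

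I expect the main obstacle to be the second step: verifying that substituting the FTC reconstruction into the boundary constraint reproduces exactly the matrix $B_T$, and that the resulting reconstruction operator has the closed 3-PI form asserted in \Cref{fig:Gb_definitions}. This is the point at which admissibility is used in an essential way, and where a bookkeeping error in the affine dependence on the core boundary values would break the bijection; once $\mcl T$ and $\mcl A$ are pinned down, the transport of the dynamics and the regularity equivalence are comparatively mechanical.
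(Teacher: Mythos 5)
Your proposal is correct and follows essentially the same route as the paper, which defers the details to Theorem 12 of \cite{shivakumar_SICON2022} but whose intended argument is exactly your outline: use the FTC reconstruction and invertibility of $B_T$ to build the bijection $\mcl T=\mcl D^{-1}$ (this is the content of \Cref{thm:T_map} and the $T$, $U_1$, $U_2$, $B_Q$, $R_{D,i}$ constructions in \Cref{fig:Gb_definitions}), then compose the right-hand-side map with $\mbf x_D=\mcl B\mbf x_f$ to obtain $\mcl A$ and transport the solution clauses through the isometry $\norm{\mbf x_f}_{\mcl T}=\norm{\mbf x}_{L_2}$. The only cosmetic difference is that you derive the bijection inside the proof, whereas the paper factors it out as a separate theorem.
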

\begin{proof}
The proof is simply a matter of using the definitions of $\{\mcl T, \mcl A\}$  and verifying that the definition of
solution is satisfied for both the PDE and PIE. The proof is similar to the proof of Theorem 12 in \cite{shivakumar_SICON2022} with $n=\{n_0,n_1,n_2\}$, $\hat{\mcl T} = \mcl T$, $\hat{\mcl A} = \mcl A$ and other unused parameters set to empty sets or zeros.
\end{proof}

The above result provides a map from the PDE solution, $\mbf x$, to the solution of the corresponding PIE, $\mcl D\mbf x$. However, an inverse map, given by $\mcl T$, exists that maps the solution of the PIE back to the PDE solution. The invertible relation between the solutions can be summarized as follows.

\begin{restatable}{theorem}{Tmap}\label{thm:T_map}
Given an $n$, and $\mbf G_{\mathrm b}$ with $\{n,\mbf G_{\mathrm b}\}$ admissible, let $\mcl{T}$ be as defined in \cref{fig:Gb_definitions}, $X$ as defined in Eq.~\eqref{eq:odepde_general_domain} and $\mcl D$ $:=$diag$(\partial_s^0 I_{n_0},$ $\partial_s I_{n_1},$ $\partial_s^2 I_{n_2})$. Then we have the following.
\begin{enumerate}[label=(\alph*)]
\item If $\mbf{x} \in X$, then $\mcl D \mbf x \in L_2^{n_{\mbf x}}$ and $\mbf{x} = \mcl{T}\mcl D \mbf x$.
\item For any $\hat{\mbf x} \in L_2^{n_{\mbf x}}$, $\hat{\mcl{T}} \hat{\mbf{x}}\in X$ and $\hat{\mbf{x}} = \mcl D \mcl{T} \hat{\mbf{x}}$.
\end{enumerate}
\end{restatable}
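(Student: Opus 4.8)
The plan is to show that $\mcl D$ and $\mcl T$ are mutually inverse bijections between $X$ and $L_2^{n_{\mbf x}}$, using the Fundamental Theorem of Calculus (FTC) as the central tool and the admissibility hypothesis (invertibility of $B_T$) as the ingredient that makes the boundary bookkeeping well defined. Both parts rest on the same two FTC identities: for $\mbf x_1\in W_1^{n_1}$ one has $\mbf x_1(s)=\mbf x_1(a)+\int_a^s \partial_s\mbf x_1(\theta)\,d\theta$, and for $\mbf x_2\in W_2^{n_2}$ one has $\mbf x_2(s)=\mbf x_2(a)+(s-a)\,\partial_s\mbf x_2(a)+\int_a^s (s-\theta)\,\partial_s^2\mbf x_2(\theta)\,d\theta$. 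These are precisely the reconstruction kernels encoded by the functions $T$ and $U_2$ in \Cref{fig:Gb_definitions}: each identity writes a state (and hence each entry of $\mbf x_D$ and $x_b$) as a fixed linear image of the $n_S$ \emph{core} boundary values $\text{col}(\mbf x_1(a),\mbf x_2(a),\partial_s\mbf x_2(a))$ plus an integral of the highest derivative, i.e.\ of $\mcl D\mbf x$.

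For part (a), membership $\mcl D\mbf x\in L_2^{n_{\mbf x}}$ is immediate from the definition of $W^n$ in \eqref{eq:sobolev}, since $\mbf x_0\in L_2^{n_0}$, $\partial_s\mbf x_1\in L_2^{n_1}$, and $\partial_s^2\mbf x_2\in L_2^{n_2}$. To prove $\mbf x=\mcl T\mcl D\mbf x$, I would substitute the two FTC identities into the boundary constraint defining $X$ (Eq.~\eqref{eq:odepde_b}). Collecting the coefficient of the core boundary values turns this constraint into a linear equation of the form $B_T\,x_{\text{core}}=(\text{integrals of }\mcl D\mbf x)$, where the coefficient matrix is exactly the $B_T$ of \eqref{as:invertible} (here $B\,\text{col}(T(0),T(b-a))$ supplies the boundary-value part of $Bx_b$, and $\int_a^b B_I(s)U_2T(s-a)\,ds$ supplies that of $\int_a^b B_I\mbf x_D\,ds$). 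Admissibility then determines $x_{\text{core}}$ uniquely in terms of $\mcl D\mbf x$, and feeding it back into the FTC reconstruction reproduces exactly the operator $\mcl T$ applied to $\mcl D\mbf x$.

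For part (b), set $\mbf x:=\mcl T\hat{\mbf x}$ for $\hat{\mbf x}\in L_2^{n_{\mbf x}}$. The regularity claim $\mbf x\in W^n$ follows because each block of $\mcl T$ is a single or double indefinite integral of an $L_2$ function, which lands in $W_1$ or $W_2$ respectively. The inversion $\hat{\mbf x}=\mcl D\mcl T\hat{\mbf x}$ is the ``differentiation undoes integration'' half of FTC: applying $\partial_s$ once to the $W_1$ block and twice to the $W_2$ block annihilates the polynomial boundary terms and returns the integrand $\hat{\mbf x}$. Finally, membership $\mbf x\in X$ holds because $\mcl T$ embeds the core boundary values as $x_{\text{core}}=B_T^{-1}(\text{integrals of }\hat{\mbf x})$; since $\mcl D\mbf x=\hat{\mbf x}$, this is exactly the solution of $B_T x_{\text{core}}=(\text{integrals of }\mcl D\mbf x)$, so the boundary constraint of \eqref{eq:odepde_b} is satisfied by construction.

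I expect the only genuine obstacle to be the algebraic identification in part (a): one must verify that substituting the FTC identities into Eq.~\eqref{eq:odepde_b} produces a coefficient of $x_{\text{core}}$ that is precisely the matrix $B_T$ of \eqref{as:invertible}, so that invertibility of $B_T$ is exactly the right (and sufficient) hypothesis. Once this identification is confirmed, the remaining steps are direct applications of FTC and its converse, and the two parts are mirror images of one another, paralleling the argument behind \Cref{thm:equivalence} and Theorem 12 of \cite{shivakumar_SICON2022}.
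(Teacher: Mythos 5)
Your proposal is correct and follows essentially the same route as the paper: the paper's own proof is only a pointer to Theorem~10 of \cite{shivakumar_SICON2022}, and that argument is precisely the one you outline --- reconstruct each state from its highest derivative and the $n_S$ core boundary values via the Fundamental Theorem of Calculus, substitute into the boundary constraint to obtain $B_T x_{\text{core}} = (\text{integrals of }\mcl D\mbf x)$, and invoke admissibility to solve for $x_{\text{core}}$, which identifies the reconstruction with $\mcl T\mcl D\mbf x$ (and conversely for part~(b)). Your identification of the coefficient matrix with $B_T$ of \eqref{as:invertible} is exactly the step that the definitions of $B_Q$ and $G_i$ in \Cref{fig:Gb_definitions} encode, so no further work is needed.
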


\begin{proof}
This can be verified by substituting the definitions of $\mcl D$ and $\mcl T$. The proof is similar to the proof of Theorem 10 in \cite{shivakumar_SICON2022} with $n=\{n_0,n_1,n_2\}$, $\hat{\mcl T} = \mcl T$ and other unused parameters set to empty sets or zeros.
\end{proof}
This bijective mapping ensures the existence of both solutions when one of them exists. Given the PDE parameters $\{n, \mbf G_{\mathrm b}, \mbf G_{\mathrm p}\}$ we now have a set of formulae to find the corresponding PIE parameters $\{\mcl T,\mcl A\}$. Furthermore, we know that a solution to the PDE equation provides a unique solution to the PIE and vice versa.


\section{Equivalence of representations: PDE and PIE}\label{sec:equivalence}
In this section, we show that the solutions to the two representations (PDE and PIE) have equivalent stability properties and present a solvable optimization problem to prove stability. However, we have to define a notion of stability for the two representations.
\subsection{Definitions of stability}\label{sec:stab_pde}
We define the notion of stability of a PDE based on the stability of $\mbf{x}$ that satisfies the PDE for given initial conditions where the stability is defined with respect to the standard Sobolev norm $H$ which is defined as
\begin{align*}
\norm{\mbf x}_{H} = \sum_{i=0}^2 \norm{\mbf x_i}_{H_i},\quad \norm{\mbf x_i}_{H_i}: = \sum_{j=0}^{i}\norm{\partial_s^j \mbf x_i}_{L_2}.
\end{align*}

\begin{definition}[Exponential Stability of a PDE]
We say a PDE defined by $\{n, \mbf G_{\mathrm b}, \mbf G_{\mathrm p}\}$ is exponentially stable, if there exists constants $M$, $\alpha>0$ such that for any $\mbf{x}^0\in X$, if $\mbf{x}$ satisfies the PDE defined by $\{n, \mbf G_{\mathrm b}, \mbf G_{\mathrm p}\}$ with initial condition $\mbf x^0$ then
\begin{align*}
\norm{\mbf{x}(t)}_{H}\le M\norm{\mbf{x}^0}_He^{-\alpha t} \qquad \text{for all} ~t\ge 0.
\end{align*}
\end{definition}

Similar to the stability of a PDE, we can define the stability of a PIE system based on stability $\mbf x_f$ that satisfies the PIE for some initial conditions and zero inputs. Unlike the stability of PDE, the stability of a PIE is defined with respect to the $L_2$-norm since solutions of PIE need not have spatial continuity.

\begin{definition}[Exponential Stability of a PIE]
We say a PIE defined by $\{\mcl T,\mcl A\}$ is exponentially stable, if there exists constants $M$, $\alpha>0$ such that for any $\mbf{x}_f^0\in L_2^{n}$, if $\mbf{x}_f$ satisfies the PIE defined by $\{\mcl T,\mcl A\}$ with initial condition $\mbf x^0_f$, then $\norm{\mbf{x}_f(t)}_{ L_2}\le M\norm{\mbf x_f^0}_{ L_2}e^{-\alpha t} \qquad \text{for all} ~t\ge 0$.
\end{definition}

Based on these definitions, the goal now is to show that for any $\mbf x$ that satisfies the PDE and $\mbf x_f$ that satisfies the PIE, if $\mbf x = \mcl T\mbf x_f$ with $\mcl T$ invertible, then exponential decay of $\mbf x$ implies exponential decay of $\mbf x_f$ and vice versa. The main hurdle in proving this is the fact that these two notions of stability are defined using different norms ($\norm{\cdot}_H$ and $\norm{\cdot}_{L_2}$). For any $\mbf x$ and $\mbf x_f$ such that $\mbf x = \mcl T\mbf x_f$, we know that $\norm{\mbf x}_{H}\le c$ implies $\norm{\mcl T\mbf x_f}_{L_2}<c$, but the converse is typically not true. Thus, additional steps are needed to prove the converse implication which is accomplished using a new norm on the space $X$, denoted by $\norm{\cdot}_X$, to show that:
\begin{enumerate}
\item $\mcl T$ is a norm-preserving bijection from $L_2$ to $X$ (when equipped with $\norm{\cdot}_X$).
\begin{itemize}
\item[-] Consequently, $X$ is closed under $\norm{\cdot}_X$ (and $\mcl T$ is unitary)
\end{itemize}
\item $\norm{\cdot}_H$ is equivalent to $\norm{\cdot}_X$ on the subspace $X$ (thus stability of PDE w.r.t. $\norm{\cdot}_H$ is equivalent to stability of PIE w.r.t. $\norm{\cdot}_{\mcl T}$)
\item Finally, the PDE $\{n,\mbf G_{\mathrm p},\mbf G_{\mathrm b}\}$ is exponentially stable if and only if the corresponding PIE defined by $\{\mcl T,\mcl A\}$ is exponentially stable (PIE parameters are related to PDE parameters as shown in \cref{fig:Gb_definitions}).
\end{enumerate}

\subsection{The map $\mcl T$ are unitary}\label{subsec:unitary}
First, we would like to show that $X$ is complete with respect to the norm $\norm{\cdot}_{X}$. Previously, in \Cref{thm:T_map}, we showed that $\mcl T$ is invertible. Therefore, we must show that $\mcl T$ preserves the inner product. However, we first define the new $X$-inner product as
\begin{align}\label{eq:ip_xv}
\ip{\mbf{x}}{\mbf{y}}_{X}:= \sum_{i=0}^2 \ip{\partial_s^i\mbf{x}_i}{\partial_s^i\mbf{y}_i}_{L_2}= \ip{\mcl D\mbf x}{\mcl D \mbf y}_{L_2}.
\end{align}

\begin{restatable}{theorem}{thmUnitary}\label{thm:unitary_T}
Suppose $\{n,\mbf G_{\mathrm b}\}$ is admissible, $\mcl T$ is as defined in \Cref{fig:Gb_definitions} and inner product $\ip{\cdot}{\cdot}_X$ is as defined in \Cref{eq:ip_xv}. Then, for any $\mbf{{x}},~ \mbf{{y}}\in L_2^{n_{\mbf x}}$
\begin{align}
&\ip{\mcl T\mbf{x}}{\mcl{T}\mbf{y}}_X = \ip{\mbf{x}}{\mbf{y}}_{L_2}.
\end{align}

\end{restatable}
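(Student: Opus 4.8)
The plan is to reduce the identity to the bijectivity relations already recorded in \Cref{thm:T_map}, after which the statement collapses into a single substitution. The crucial observation, which I would isolate first, is that the $X$-inner product in \Cref{eq:ip_xv} is built directly out of the operator $\mcl D$: for any admissible $\mbf u,\mbf v$ we have $\ip{\mbf u}{\mbf v}_X = \ip{\mcl D\mbf u}{\mcl D\mbf v}_{L_2}$. This factorization is exactly what transports the $X$-geometry back to the $L_2$-geometry through $\mcl D$, so the whole theorem amounts to exploiting that $\mcl D$ inverts $\mcl T$.

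First I would fix arbitrary $\mbf x,\mbf y\in L_2^{n_{\mbf x}}$ and apply \Cref{thm:T_map}(b) to obtain $\mcl T\mbf x,\mcl T\mbf y\in X$. This membership is what makes the left-hand side meaningful: it guarantees that $\mcl T\mbf x$ and $\mcl T\mbf y$ lie in the domain $X\subset W^n$ on which $\norm{\cdot}_X$ is a genuine norm, and in particular that the derivatives $\partial_s(\mcl T\mbf x)_1$ and $\partial_s^2(\mcl T\mbf x)_2$ implicit in $\mcl D\mcl T\mbf x$ exist in $L_2$. Using the factorization above, I would then write $\ip{\mcl T\mbf x}{\mcl T\mbf y}_X = \ip{\mcl D\mcl T\mbf x}{\mcl D\mcl T\mbf y}_{L_2}$.

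The final step invokes the second identity of \Cref{thm:T_map}(b), namely that $\mcl D$ is a left inverse of $\mcl T$ on $L_2^{n_{\mbf x}}$: $\mcl D\mcl T\mbf x = \mbf x$ and $\mcl D\mcl T\mbf y = \mbf y$. Substituting these yields $\ip{\mcl T\mbf x}{\mcl T\mbf y}_X = \ip{\mbf x}{\mbf y}_{L_2}$, which is precisely the claim. Combined with surjectivity of $\mcl T$ onto $X$ (supplied by $\mcl T\mcl D = I$ on $X$ in \Cref{thm:T_map}(a)), this shows $\mcl T$ is a bijective isometry, hence unitary, so that $X$ equipped with $\norm{\cdot}_X$ inherits completeness from $L_2^{n_{\mbf x}}$.

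Because every step is a direct substitution, there is no true analytic obstacle remaining: the entire content has been front-loaded into \Cref{thm:T_map}. The only point demanding care is the well-definedness issue flagged above — one must confirm that $\mcl T\mbf x$ genuinely lands in $X$ (so the $X$-inner product is defined and the relevant spatial derivatives live in $L_2$) before applying the factorization. Once that membership is cited from \Cref{thm:T_map}(b), the computation is immediate.
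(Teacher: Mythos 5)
Your proof is correct and follows essentially the same route as the paper: the paper's own (one-line) proof says the result "follows directly from the definition of the $X$ inner product and the map $\mcl T$," and your argument is exactly that, made explicit — factor $\ip{\cdot}{\cdot}_X$ through $\mcl D$ via \Cref{eq:ip_xv}, use \Cref{thm:T_map}(b) to guarantee $\mcl T\mbf x,\mcl T\mbf y\in X$ and to substitute $\mcl D\mcl T = I$ on $L_2^{n_{\mbf x}}$. Your version is in fact more self-contained than the paper's, which defers the computation to an external reference.
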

\begin{proof}
The proof follows directly from the definition of the $X$ inner product and the map $\mcl T$. The proof is similar to the proof of Theorem 18 in \cite{shivakumar_SICON2022} with $n=\{n_0,n_1,n_2\}$, $\hat{\mcl T} = \mcl T$ and other unused parameters set to empty sets or zeros. 
\end{proof}

Now, we can show that norms induced by the inner products $\ip{\cdot}{\cdot}_{X}$ and $\ip{\cdot}{\cdot}_H$ on $X$ are equivalent and consequently, notions of stability with respect to these norms will be equivalent.

\begin{restatable}{lemma}{lemNormEquivalence}\label{lem:norm_equivalence}
Suppose pair $\{n,\mbf G_{\mathrm b}\}$ is admissible. Then, for any $\mbf{x}\in X$, $\norm{\mbf{x}}_{X}\le \norm{\mbf{x}}_H$ and there exists a constant $c_0>0$ such that $\norm{\mbf{x}}_H\le c_0\norm{\mbf{x}}_{X}$.
\end{restatable}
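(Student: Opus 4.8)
The plan is to prove the two inequalities separately, since they have different characters. The first inequality, $\norm{\mbf x}_X \le \norm{\mbf x}_H$, is the easy direction and should follow essentially by unpacking definitions. Recall $\norm{\mbf x}_X^2 = \ip{\mcl D\mbf x}{\mcl D\mbf x}_{L_2} = \sum_{i=0}^2 \norm{\partial_s^i \mbf x_i}_{L_2}^2$, which picks out only the \emph{highest} well-defined derivative of each block $\mbf x_i$. On the other hand, $\norm{\mbf x}_H = \sum_{i=0}^2 \norm{\mbf x_i}_{H_i} = \sum_{i=0}^2 \sum_{j=0}^i \norm{\partial_s^j \mbf x_i}_{L_2}$ sums over \emph{all} derivatives up to order $i$. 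First I would note the mismatch between the sum-of-squares form of $\norm{\cdot}_X$ and the sum-of-norms form of $\norm{\cdot}_H$; since $\norm{\cdot}_H$ contains the terms $\norm{\partial_s^i\mbf x_i}_{L_2}$ as a subset of its (nonnegative) summands and $\sqrt{\sum a_i^2}\le \sum a_i$ for $a_i\ge 0$, the bound $\norm{\mbf x}_X \le \norm{\mbf x}_H$ drops out immediately.

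The substantive direction is $\norm{\mbf x}_H \le c_0 \norm{\mbf x}_X$, i.e.\ controlling \emph{lower}-order derivatives (and boundary values) by the single top derivative. The key structural fact, supplied by \Cref{thm:T_map}(a), is that every $\mbf x\in X$ satisfies $\mbf x = \mcl T\mcl D\mbf x$. Writing $\hat{\mbf x} := \mcl D\mbf x \in L_2^{n_{\mbf x}}$, so that $\norm{\mbf x}_X = \norm{\hat{\mbf x}}_{L_2}$ by \Cref{thm:unitary_T} (taking $\mbf x = \mbf y = \hat{\mbf x}$), the goal reduces to showing $\norm{\mcl T\hat{\mbf x}}_H \le c_0 \norm{\hat{\mbf x}}_{L_2}$ for all $\hat{\mbf x}\in L_2^{n_{\mbf x}}$. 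Since $\mcl T$ is a $3$-PI operator built from the functions $T$, $U_2$ and the admissibility-derived inverse $B_T^{-1}$ (all appearing in \Cref{fig:Gb_definitions}), each lower derivative $\partial_s^j\mbf x_i$ for $j<i$ is recovered by applying the Fundamental Theorem of Calculus to $\hat{\mbf x}$, i.e.\ by integrating $\hat{\mbf x}$ a suitable number of times and adding boundary terms fixed by the invertible matrix $B_T^{-1}$. The plan is therefore to express each component $\partial_s^j\mbf x_i$ explicitly as a bounded PI operator applied to $\hat{\mbf x}$, and then bound its $L_2$-norm.

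The main obstacle I expect is verifying boundedness of these recovery operators from $L_2$ into $L_2$ uniformly, and in particular bounding the boundary-value contributions. Concretely, the indefinite integrals (Volterra-type operators with kernels coming from $T$) are bounded on $L_2[a,b]$ because $[a,b]$ is a finite interval and the kernels are in $L_\infty$; applying Cauchy--Schwarz to $\int_a^s \hat{\mbf x}(\theta)\,d\theta$ gives a pointwise bound by $\sqrt{b-a}\,\norm{\hat{\mbf x}}_{L_2}$, which is then square-integrable over $[a,b]$. The boundary terms $x_b$ are the genuinely delicate part: they are determined from $\hat{\mbf x}$ through the constraint defining $X$ together with $B_T^{-1}$, and one must show the resulting linear map $\hat{\mbf x}\mapsto x_b$ is bounded into $\R^{2n_S}$. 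This is where admissibility is essential: invertibility of $B_T$ makes $B_T^{-1}$ a fixed finite matrix, so the boundary map is a composition of bounded integral functionals (again finite-interval, $L_\infty$-kernel, hence bounded by Cauchy--Schwarz) with the constant matrix $B_T^{-1}$, yielding a uniform bound $\norm{x_b}\le C\norm{\hat{\mbf x}}_{L_2}$.

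Finally, I would assemble the estimate: each of the finitely many summands in $\norm{\mcl T\hat{\mbf x}}_H = \sum_{i,j}\norm{\partial_s^j\mbf x_i}_{L_2}$ is bounded by a constant multiple of $\norm{\hat{\mbf x}}_{L_2}$, and summing the finitely many constants produces a single $c_0>0$ (depending only on $b-a$, the $L_\infty$-norms of the kernel functions in $\mcl T$, and $\norm{B_T^{-1}}$) with $\norm{\mbf x}_H = \norm{\mcl T\hat{\mbf x}}_H \le c_0\norm{\hat{\mbf x}}_{L_2} = c_0\norm{\mbf x}_X$. Since this constant is independent of $\mbf x$, the norm equivalence on $X$ follows. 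As with the earlier results, I expect the cleanest route is to invoke the analogous computation in \cite{shivakumar_SICON2022} and specialize it to the present parameters, but the argument above gives the self-contained reasoning behind why admissibility (finiteness of $B_T^{-1}$) and the finite-interval $L_\infty$-kernel structure of $\mcl T$ together deliver the constant $c_0$.
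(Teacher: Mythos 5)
Your proposal is correct and follows the same route as the paper, whose proof simply defers to Lemma 17 of \cite{shivakumar_SICON2022}: the easy direction by comparing the summands of the two norms, and the substantive direction by writing $\mbf x=\mcl T\mcl D\mbf x$ and bounding $\mcl T$ as a map from $L_2$ into $H$ using the $L_\infty$ kernels on a finite interval and the fixed matrix $B_T^{-1}$ supplied by admissibility. Your self-contained sketch is exactly the content of that cited lemma specialized to the present parameters.
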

\begin{proof}
The proof is same as the proof of Lemma 17 in \cite{shivakumar_SICON2022} with $n=\{n_0,n_1,n_2\}$ and other unused parameters set to empty sets or zeros.
\end{proof}

Now that we have established that $X$-norm can be upper bounded by $H$-norm, we can use that to prove the equivalence of the stability of PDE and PIE because $X$-norm on $X$ is isometric to $L_2$-norm on $L_2$.
\begin{restatable}{theorem}{thmStability}\label{thm:stability_equivalence}
Given an $n$ and system parameters $\{\mbf G_{\mathrm b}, \mbf G_{\mathrm{p}}\}$ as defined in \Cref{eq:BC-parms,eq:pde-parms} with $\{n, \mbf G_{\mathrm b}\}$ admissible, suppose $\{\mcl{T}$ $\mcl A\}$ are as defined in \cref{fig:Gb_definitions}. Then, the PDE defined by $\{n,\mbf G_{\mathrm b}, \mbf G_{\mathrm{p}}\}$ is exponentially stable if and only if the PIE defined by $\{\mcl{T}$ $\mcl A\}$ is exponentially stable.
\end{restatable}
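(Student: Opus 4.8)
The plan is to transport exponential decay estimates between the two representations using the solution-level correspondence of \Cref{thm:equivalence} and the norm relations already established in \Cref{thm:unitary_T} and \Cref{lem:norm_equivalence}. As anticipated in the discussion preceding the statement, the only real friction is that PDE stability is phrased in the $H$-norm while PIE stability is phrased in the $L_2$-norm; the $X$-norm is the bridge. I would first record two facts. From the definition of the $X$-inner product in \Cref{eq:ip_xv}, $\norm{\mbf x}_X = \norm{\mcl D\mbf x}_{L_2}$ for every admissible $\mbf x$, so if $\mbf x\in X$ and $\mbf x_f:=\mcl D\mbf x$ then $\norm{\mbf x}_X = \norm{\mbf x_f}_{L_2}$ (equivalently, this is the unitarity of $\mcl T$ from \Cref{thm:unitary_T} combined with the identity $\mbf x = \mcl T\mbf x_f$ of \Cref{thm:T_map}). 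Second, \Cref{lem:norm_equivalence} supplies the two-sided comparison $\norm{\mbf x}_X \le \norm{\mbf x}_H \le c_0\norm{\mbf x}_X$ on $X$.

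For the direction PDE stable $\implies$ PIE stable, I would take an arbitrary PIE solution $\mbf x_f$ with $\mbf x_f^0\in L_2^{n_{\mbf x}}$, set $\mbf x := \mcl T\mbf x_f$, and note that $\mcl D\mbf x = \mbf x_f$ by \Cref{thm:T_map}(b), so by \Cref{thm:equivalence} $\mbf x$ solves the PDE with $\mbf x^0 = \mcl T\mbf x_f^0\in X$. Chaining the two facts above with the assumed PDE estimate gives
\begin{align*}
\norm{\mbf x_f(t)}_{L_2} &= \norm{\mbf x(t)}_X \le \norm{\mbf x(t)}_H \le M\norm{\mbf x^0}_H e^{-\alpha t}\\
&\le Mc_0\norm{\mbf x^0}_X e^{-\alpha t} = Mc_0\norm{\mbf x_f^0}_{L_2} e^{-\alpha t},
\end{align*}
so the PIE is exponentially stable with rate $\alpha$ and constant $Mc_0$.

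For the converse, I would symmetrically take a PDE solution $\mbf x$ with $\mbf x^0\in X$ and set $\mbf x_f := \mcl D\mbf x$, which by \Cref{thm:equivalence} solves the PIE with $\mbf x_f^0 = \mcl D\mbf x^0$. Applying the same comparisons in the opposite order and the assumed PIE estimate yields
\begin{align*}
\norm{\mbf x(t)}_H &\le c_0\norm{\mbf x(t)}_X = c_0\norm{\mbf x_f(t)}_{L_2} \le c_0 M\norm{\mbf x_f^0}_{L_2} e^{-\alpha t}\\
&= c_0 M\norm{\mbf x^0}_X e^{-\alpha t} \le c_0 M\norm{\mbf x^0}_H e^{-\alpha t},
\end{align*}
which is exponential stability of the PDE with rate $\alpha$ and constant $c_0M$. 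Since every ingredient is already proved, no single step is genuinely hard; the main thing to watch is invoking the solution correspondence in the correct direction in each implication, so that the initial data $\mbf x^0$ and $\mbf x_f^0$ are matched through $\mcl T$ (resp.\ $\mcl D$) and the norm constants line up. Note also that the decay rate $\alpha$ is preserved in both directions, and only the overshoot constant is inflated by the factor $c_0$ coming from \Cref{lem:norm_equivalence}.
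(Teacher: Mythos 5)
Your proof is correct and follows the same route the paper intends: the paper's own proof is just a deferral to the analogous result in \cite{shivakumar_SICON2022} (``a direct application of the stability definitions''), which is precisely the argument you spell out --- transporting the decay estimate through the solution correspondence of \Cref{thm:equivalence} and \Cref{thm:T_map}, the isometry $\norm{\mbf x}_X = \norm{\mcl D\mbf x}_{L_2}$, and the two-sided norm comparison of \Cref{lem:norm_equivalence}. The bookkeeping of initial conditions and the resulting constants ($Mc_0$ in each direction, rate $\alpha$ preserved) is handled correctly.
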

\begin{proof}
The proof is a direct application of the stability definitions. The proof is same as the proof of Theorem 22 in \cite{shivakumar_SICON2022} with $n=\{n_0,n_1,n_2\}$ and other unused parameters set to empty sets or zeros.
\end{proof}

Using the above results, we propose the following optimization problem to test the stability of a PDE with integral terms.

\begin{restatable}{theorem}{LPIStability}\label{thm:LPI_stability}
Given a set of PDE parameters $\{n,$ $\mbf G_{\mathrm b},$ $\mbf G_{\mathrm p}\}$, suppose there exist $\alpha, \delta >0$, $\{R_0, R_1, R_2\}$, $\{H_0,H_1,H_2\}$ such that $\threepi{R_i}, \threepi{H_i}\in [\Pi_3]_{n_{\mbf x}, n_{\mbf x}}^+$, $\threepi{R_i}\ge \alpha I$, $\threepi{H_i}\ge \delta \mcl T^*\mcl T$ and $\threepi{H_i} = -\left(\mcl T^*\threepi{R_i}\mcl A + \mcl A^*\threepi{R_i}\mcl T\right)$ where $\{\mcl{T}$, $\mcl A\}$ are as defined in \cref{fig:Gb_definitions}. Then, the PDE defined by $\{n,\mbf G_{\mathrm b}, \mbf G_{\mathrm{p}}\}$ is exponentially stable.
\end{restatable}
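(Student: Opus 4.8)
The plan is to certify exponential stability of the \emph{PIE} $\{\mcl T,\mcl A\}$ and then invoke \Cref{thm:stability_equivalence}, by which exponential stability of the PIE is equivalent to that of the PDE $\{n,\mbf G_{\mathrm b},\mbf G_{\mathrm p}\}$; hence it suffices to exhibit a Lyapunov functional for the PIE. Guided by the finite-dimensional descriptor analogue $E\dot x = Ax$ with storage $x^{\top}E^{\top}PEx$, I would take
\begin{align*}
V(\mbf x_f):=\ip{\mcl T\mbf x_f}{\threepi{R_i}\mcl T\mbf x_f}_{L_2}.
\end{align*}
Coercivity $\threepi{R_i}\ge\alpha I$ gives the lower bound $V(\mbf x_f)\ge\alpha\norm{\mcl T\mbf x_f}_{L_2}^2=\alpha\norm{\mbf x_f}_{\mcl T}^2$, while boundedness of the $3$-PI operator $\threepi{R_i}\in\mcl L(L_2,L_2)$ gives $V(\mbf x_f)\le c_1\norm{\mbf x_f}_{\mcl T}^2$ with $c_1$ the operator norm of $\threepi{R_i}$; thus $V$ is equivalent to the squared $\mcl T$-norm.

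Next I would differentiate $V$ along a solution $\mbf x_f$. Setting $\mbf z(t):=\mcl T\mbf x_f(t)$, the definition of the $\mcl T$-norm in \eqref{eq:Tnorm} makes $\mbf z$ Fr\'echet differentiable in $L_2$ almost everywhere on $\R_+$, with $\dot{\mbf z}=\mcl T\dot{\mbf x}_f=\mcl A\mbf x_f$ by the PIE dynamics \eqref{eq:PIE_full}. Applying the product rule to the bounded bilinear form $\ip{\cdot}{\threepi{R_i}\cdot}_{L_2}$ and substituting the Lyapunov identity $\threepi{H_i}=-(\mcl T^*\threepi{R_i}\mcl A+\mcl A^*\threepi{R_i}\mcl T)$ yields
\begin{align*}
\dot V=\ip{\mbf x_f}{(\mcl A^*\threepi{R_i}\mcl T+\mcl T^*\threepi{R_i}\mcl A)\mbf x_f}_{L_2}=-\ip{\mbf x_f}{\threepi{H_i}\mbf x_f}_{L_2}.
\end{align*}
Note that this computation uses only the product rule and does not require $\threepi{R_i}$ to be self-adjoint. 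The dissipation bound $\threepi{H_i}\ge\delta\,\mcl T^*\mcl T$ then gives $\dot V\le-\delta\norm{\mcl T\mbf x_f}_{L_2}^2=-\delta\norm{\mbf x_f}_{\mcl T}^2\le-(\delta/c_1)V$.

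To finish, a Gr\"onwall/comparison argument gives $V(t)\le V(0)e^{-(\delta/c_1)t}$, and combining with the two-sided bound on $V$ produces $\norm{\mbf x_f(t)}_{\mcl T}\le\sqrt{c_1/\alpha}\,\norm{\mbf x_f^0}_{\mcl T}\,e^{-(\delta/2c_1)t}$, i.e.\ exponential decay in the $\mcl T$-norm. Because $\mcl T$ is a bounded bijection on $L_2^{n_{\mbf x}}$ (\Cref{thm:T_map}), $\mcl T^{-1}$ is bounded, so $\norm{\cdot}_{\mcl T}$ and $\norm{\cdot}_{L_2}$ are equivalent; this transfers the decay to the $L_2$-norm with $M=\norm{\mcl T}\,\norm{\mcl T^{-1}}\sqrt{c_1/\alpha}$, establishing exponential stability of the PIE and hence, via \Cref{thm:stability_equivalence}, of the PDE.

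The main obstacle is the differentiation step: by \Cref{def:piesolution} a PIE solution is only Fr\'echet differentiable with respect to the \emph{weak} $\mcl T$-norm and only almost everywhere, so I must first verify that $t\mapsto V(t)$ is absolutely continuous and that the product rule is legitimate under this weak notion of differentiability, before the fundamental theorem of calculus may be applied to integrate $\dot V$. The remaining inequalities are routine consequences of the positivity of $\threepi{R_i},\threepi{H_i}$ and the boundedness of PI operators.
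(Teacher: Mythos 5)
Your argument follows the paper's proof almost line for line: the same Lyapunov functional $V(\mbf x_f)=\ip{\mcl T\mbf x_f}{\threepi{R_i}\mcl T\mbf x_f}_{L_2}$, the same derivative computation using the identity $\threepi{H_i}=-(\mcl T^*\threepi{R_i}\mcl A+\mcl A^*\threepi{R_i}\mcl T)$, a Gronwall step, and the final appeal to \Cref{thm:stability_equivalence}. Everything up to the estimate $\norm{\mbf x_f(t)}_{\mcl T}\le\sqrt{c_1/\alpha}\,\norm{\mbf x_f^0}_{\mcl T}\,e^{-\delta t/(2c_1)}$ is sound, and your remark that one must verify absolute continuity of $t\mapsto V(t)$ under the weak ($\mcl T$-norm) notion of differentiability is a legitimate technical point that the paper also leaves implicit.

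The final step, however, contains a genuine error. You assert that since $\mcl T$ is a bounded bijection, $\mcl T^{-1}$ is bounded and hence $\norm{\cdot}_{\mcl T}$ and $\norm{\cdot}_{L_2}$ are equivalent on $L_2^{n_{\mbf x}}$. By \Cref{thm:T_map}, $\mcl T$ is a bijection from $L_2^{n_{\mbf x}}$ onto $X$, \emph{not} onto $L_2^{n_{\mbf x}}$; $X$ is a non-closed subspace of $L_2$, so the open mapping theorem does not apply, and the inverse is the differential operator $\mcl D$, which is unbounded in the $L_2$ topology. Concretely, for $n=\{0,1,0\}$ on $[0,1]$ with boundary condition $x(t,0)=0$, $\mcl T$ is the Volterra operator $(\mcl T\mbf v)(s)=\int_0^s\mbf v(\theta)\,d\theta$; taking $\mbf v_k(s)=\sin(k\pi s)$ gives $\norm{\mcl T\mbf v_k}_{L_2}\to 0$ while $\norm{\mbf v_k}_{L_2}$ stays constant, so no bound of the form $\norm{\mbf v}_{L_2}\le C\norm{\mcl T\mbf v}_{L_2}$ exists and the two norms are not equivalent. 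Consequently, decay of $\norm{\mbf x_f(t)}_{\mcl T}$ does not by itself yield the $L_2$ decay demanded by the definition of exponential stability of the PIE, and the transfer argument you give would fail. (The paper's own proof asserts the $L_2$ bound directly from ``Gronwall--Bellman'' with constants $k$ and $\xi$ and is equally terse here; the honest route is through the machinery of \Cref{thm:unitary_T} and \Cref{lem:norm_equivalence}, which measure PDE decay via $\norm{\mbf x}_X=\norm{\mcl D\mbf x}_{L_2}=\norm{\mbf x_f}_{L_2}$ rather than via $\norm{\mcl T\mbf x_f}_{L_2}$.) You need a different justification for this last step.
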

\begin{proof}
Suppose $R_i$ and $H_i$ are as stated above. Suppose $\mbf x$ solves the PDE defined by $\{n,$ $\mbf G_{\mathrm b},$ $\mbf G_{\mathrm p}\}$ for some initial condition $\mbf x^0\in X$. Then $\mbf x_f:= \mcl D \mbf x$ solves the PIE defined by $\{\mcl{T}$, $\mcl A\}$ for initial condition $\mcl \mbf x^0$.

Let the Lyapunov function candidate be $V(\mbf x_f) = \ip{\mcl T\mbf x_f}{\threepi{R_i}\mcl T\mbf x_f}_{L_2}$. Then $V(\mbf x_f) \ge \alpha \norm{\mcl T\mbf x_f}_{L_2}^2$ for all $\mbf x_f\in L_2$. Taking the derivative of $V$ with respect to time along the solution trajectories of the PIE, we have
\begin{align*}
&\dot{V}(t) \\
&= \ip{\mcl T\dot{\mbf x}_f(t)}{\threepi{R_i}\mcl T\mbf x_f(t)}_{L_2}+\ip{\mcl T\mbf x_f(t)}{\threepi{R_i}\mcl T\dot{\mbf x}_f(t)}_{L_2}\\
&= \ip{\mcl A\mbf x_f(t)}{\threepi{R_i}\mcl T\mbf x_f(t)}_{L_2}+\ip{\mcl T\mbf x_f(t)}{\threepi{R_i}\mcl A\mbf x_f(t)}_{L_2}\\
&= \ip{\mbf x_f(t)}{\left(\mcl A^*\threepi{R_i}\mcl T+\mcl T^*\threepi{R_i}\mcl T\right)\mbf x_f(t)}_{L_2} \\
&= -\ip{\mbf x_f(t)}{\threepi{H_i}\mbf x_f(t)}_{L_2} \le -\delta\norm{\mcl T\mbf x_f(t)}_{L_2}^2.
\end{align*}

Then, from Gronwall-Bellman inequality,
\[
\norm{\mbf x_f(t)}_{L_2}^2 \le \frac{k}{\alpha}\norm{\mcl D\mbf x^0}_{L_2}^2\exp(-\delta \xi t)
\]
where $k = \norm{\mcl T^*\threepi{R_i}\mcl T}_{\mcl L(L_2)}$ and $\xi = \norm{\mcl T}^2_{\mcl (L_2)}$. Since the initial condition was an arbitrary function, the above inequality is satisfied for any $\mcl D\mbf x^0 \in L_2$, hence the PIE defined by $\{\mcl T, \mcl A\}$ is exponentially stable. Consequently, from \Cref{thm:stability_equivalence}, the PDE is exponentially stable.
\end{proof}

See \cite{peet_2020aut}, for a parametric form of $P_i$ and $H_i$ that allows the use of Linear Matrix Inequalities to enforce positivity constraint. Once the positivity constraint is rewritten as LMI constraints, we can use an SDP solver to find a feasible solution.

\section{Numerical Example}\label{sec:numerical}
This section presents numerical tests for the stability of the two PDEs introduced earlier. The steps involved in setting up the computational problem --- defining the PDE parameters, converting to PIE form, setting up the operator-valued optimization problem, and converting the operator-valued optimization problem to an LMI feasibility test --- are all performed using the PIETOOLS toolbox for MATLAB.
\subsection{Population Dynamics}
Recall the McKendrick PDE model for population dynamics given by
\begin{align*}
\dot{x}(t,s) &= -\partial_s x(t,s) + f(s)x(t,s),\quad s\in[0,1],\\
x(t,0) &= \int_0^1 h(s)x(t,s)ds,
\end{align*}
where $x$ is the density of the population of age $s$ at any given time $t$. The boundary condition can be considered as an approximation of the number of newborns at time $t$.

We will select the kernel in the integral term of the boundary condition as $h(s) = (1-s)s$, which implies that the population outside some normalized limits $[0,1]$ does not contribute to the birth of newborns. We will employ a constant mortality rate $f(s)=c$ and vary the $c\in \R$ to find the mortality rate, $c_0$ below which the population would go extinct (i.e, $\lim_{t\to\infty} x(t,\cdot) = 0$).

By testing the stability of the above PDE for various $c$ values (using the method of bijection), we determined that for mortality rates greater than $-0.740625$, the population goes extinct. That is, the population will survive ($x$ will not converge to zero) when the population growth rate $f(s)>0.740625$.

\subsection{Observer-based control of reaction-diffusion equation}
Consider the reaction-diffusion example presented in the introduction, where the observer is designed based on boundary measurement. We know that the observer gain, $l$, stabilizes the PDE where
\[
l(s) =-\sqrt{\lambda} \frac{I_1\left(\sqrt{\lambda(1-s^2)}\right) }{\sqrt{1-s^2}}.
\]
Since we are interested in the practical implementation, we will approximate the gains $l$ by a polynomial of a fixed order $n$ denoted by $l_n$. Furthermore, the boundary measurements are replaced by an integral. While a typical approach is to replace point measurements by an integral with a Gaussian kernel centered at the point, in this example specifically, we can use the Fundamental Theorem of Calculus to rewrite the point measurement $\partial_s x(t,1)$ exactly as
\[\partial_s x(t,1) = \partial_s x(t,0) +\int_0^1 \partial_s^2 x(t,s) ds = \int_0^1 \partial_s^2 x(t,s) ds.\]
Likewise, we replace the point measurement value $\partial_s \hat{x}(t,1)$ by an integral to get the closed-loop observer PDE as
\begin{align*}
&\dot{x}(t,s) = \lambda x(t,s) + \partial_s^2 x(t,s), \quad s\in [0,1],\\
&\dot{\hat x}(t,s) = \lambda \hat x(t,s) + \partial_s^2 \hat x(t,s) \\
&\qquad+ \int_0^1 l(s) \left(\partial_s^2 x(t,\theta) - \partial_s^2 \hat x(t,\theta)\right)d\theta,\\
&x(t,0) = 0, \quad x(t,1) = 0,\\
&\hat x(t,0) = 0, \quad \hat x(t,1) = 0.
\end{align*}
Then, using the conversion formulae presented in the appendix, we can find the PIE representation for the closed-loop PDE where $l$ replaced by $l_n$ which is the $n^{th}$ order polynomial approximation of $l$.

For $\lambda\le 5$, we can prove that $1^{st}$-order polynomial approximation (a straight line) is sufficient to guarantee the stability of the closed-loop PDE system. However, as $\lambda$ becomes large higher order polynomial approximation of $l$ ($l_4$ for $\lambda =6$ and so on) was necessary to prove the stability.

\section{Conclusion}\label{sec:conclusion}
To conclude, we presented a standard parametric form for linear PDEs in one spatial dimension with spatial integral terms. We provided a sufficient criterion that guarantees the existence of a PIE representation for such PDEs. We also presented results showing the equivalence of solutions and stability properties of the two representations. Finally, using the equivalence in the stability properties, we formulated the test for stability of PDEs as an optimization problem that can be solved using SDP solvers and demonstrated the application using numerical examples.

\section*{Acknowledgement}
This work was supported by the National Science Foundation under grants No. 1739990 and 1935453

\balance

\bibliography{references}

\newpage

\begin{figure}[!h]
\centering
\scalebox{1}{
\begin{minipage}{\textwidth}
\section*{APPENDIX}
\begin{align*}
& n_{\mbf x}=\sum_{i=0}^2 n_i,\quad n_{Si}=\sum_{j=0}^i n_{j},\quad n_{S}=\sum_{i=0}^2 n_{Si},\quad T(s) = \bmat{I_{n_1}&(s-a)I_{n_2}\\0&I_{n_2}} \in \R^{n_S \times n_S},\\
&U_{2i} = \bmat{0_{n_i\times n_{i+1:2}}\\ I_{n_{i+1:2}}} \in \R^{n_{S_{i}} \times n_{S_{i+1}}},\qquad U_2 = \bmat{\text{diag}(U_{21},U_{22})\\0_{n_2\times n_S}}\in \R^{ \left(n_{\mbf x}+n_S\right)\times n_S},\\
&B_T := B\bmat{T(0)\\T(b-a)}-\int_{a}^{b}B_{I}(s)U_2 T(s-a)ds\in \R^{n_{BC}\times n_S},\quad U_{1i} = \bmat{I_{n_i} \\ 0_{n_{i+1:2}\times n_i}},\\
&U_1 = \bmat{U_{10}&&\\&U_{11}&\\&&U_{12}}\in \R^{(n_S+n_{\mbf x})\times n_{\mbf x}},\quad Q(s)= \bmat{0&I_{n_1}&0\\0&0&(b-s)I_{n_2}\\0&0&I_{n_2}}\in \R^{n_S \times n_\mbf x},\\
&B_Q(s) = B_T^{-1}\left(B_I(s)U_1-B\bmat{0\\Q(b-s)}+\int_s^b B_I(\theta)U_2Q(\theta-s)d\theta\right),\\
&G_0 = \bmat{I_{n_0}&\\&0_{(n_{\mbf x}-n_0)}},\qquad G_2(s,\theta) = \bmat{0\\T_1(s-a)B_Q(\theta)},\qquad G_1(s,\theta) = \bmat{0\\Q_1(s-\theta)}+G_2(s,\theta),\qquad\\
&R_{D,2}(s,\theta) = U_2T(s-a)B_Q(\theta), \qquad R_{D,1}(s,\theta) = R_{D,2}(s,\theta)+U_2Q(s-\theta), \qquad \hat{A}_0(s) = A_0(s)U_1,\\
&\hat{A}_1(s,\theta) = A_0(s)R_{D,1}(s,\theta)+A_1(s,\theta)U_1+\int_a^{\theta} A_1(s,\beta)R_{D,2}(\beta,\theta)d\beta+\int_{\theta}^s A_1(s,\beta)R_{D,1}(\beta,\theta)d\beta \\&\quad + \int_{s}^b A_2(s,\beta)R_{D,1}(\beta,\theta)d\beta,\\
&\hat{A}_2(s,\theta) = A_0(s)R_{D,2}(s,\theta)+A_2(s,\theta)U_1+\int_a^{s} A_1(s,\beta)R_{D,2}(\beta,\theta)d\beta+\int_s^{\theta} A_2(s,\beta)R_{D,2}(\beta,\theta)d\beta \\&\quad + \int_{\theta}^b A_2(s,\beta)R_{D,1}(\beta,\theta)d\beta,\\
& \mcl{T} = \threepi{G_i},\qquad \mcl A = \threepi{\hat A_i}.
\end{align*}
\caption{Definitions based on $\{n,A_i,B,B_I\}$.}\label[EqnBlock]{fig:Gb_definitions}
\end{minipage}}

\end{figure}

\end{document}